\newcommand{\ZZ}{\mathbb{Z}}    
\def\^#1{^{(#1)}}
\def\s^#1{^{\smash{(#1)}}}
\newcommand{\wideword}[1]{\quad\text{#1}\quad}
\newcommand{\wideand}{\wideword{and}}
\def\:{\colon}
\newcommand{\cupdot}{\mathbin{\mathaccent\cdot\cup}}
\newcommand{\labelstyle}[1]{\upshape(\textit{#1})}
\newcommand{\mylabel}{\labelstyle{\roman*}}
\newenvironment{myenumeratearg}[1][\unskip]
    {\begin{enumerate}[label=\mylabel,#1]}
    {\end{enumerate}}
\def\itm#1{{\labelstyle{\romannumeral#1\relax}}}
\def\nlspace{\nolinebreak\space}
\newcommand{\freespace}{\kern.07em}
\newcommand{\ul}[1]{\underline{\smash{#1}}}
\newcommand{\msays}[1]{{\footnotesize\textcolor{red}{\textbf{M:} #1}}}
\newcommand{\TODO}{\msays{TODO}}
\newtheoremstyle{mythmstyle} 
    {\parsep}                    
    {\parsep}                    
    {\itshape}                   
    {}                           
    {\bfseries\scshape}          
    {.}                          
    {.5em}                       
    {}  
\newtheoremstyle{mydefstyle} 
    {\parsep}                    
    {\parsep}                    
    {}                   
    {}                           
    {\mdseries\scshape}          
    {.}                          
    {.5em}                       
    {}  
\numberwithin{equation}{section}
\theoremstyle{theorem}
\newtheorem{theorem}{Theorem}[section]
\newtheorem{corollary}[theorem]{Corollary}
\newtheorem{proposition}[theorem]{Proposition}
\theoremstyle{definition}
\newtheorem{remark}[theorem]{Remark}
\newtheorem{observation}[theorem]{Observation}
\crefname{theorem}{Theorem}{Theorems}
\crefname{proposition}{Proposition}{Propositions}
\crefname{lemma}{Lemma}{Lemmas}
\crefname{corollary}{Corollary}{Corollaries}
\crefname{remark}{Remark}{Remarks}
\crefname{example}{Example}{Examples}
\crefname{definition}{Definition}{Definitions}
\crefname{problem}{Problem}{Problems}
\crefname{observation}{Observation}{Observation}
\crefname{construction}{Construction}{Construction}
\DeclareMathOperator{\im}{im}
\DeclareMathOperator{\Sym}{Sym}
\DeclareMathOperator{\Hex}{Hex}
\let\eset=\varnothing
\def\...{...}
\newcommand{\shortStyle}{\textit}
\newcommand{\ie}{\shortStyle{i.e.,}}
\newcommand{\eg}{\shortStyle{e.g.}}
\newcommand{\cf}{\shortStyle{cf.}}
\newcommand{\resp}{resp.}
\renewcommand*{\eqref}[1]{%
  \hyperref[{#1}]{\textup{\tagform@{\ref*{#1}}}}%
}
\def\nlspace{\nolinebreak\space}
\def\nls{\nlspace}
\def\tttcube{$2\times2\times2$ cube}
\def\tttcubes{$2\times2\times2$ cubes}
\def\Z#1{G_{#1}}
\def\Zd#1{G_{#1}^*}
\begin{document}


\expandafter\title
[The clique graphs of the hexagonal lattice]
{The clique graphs of the hexagonal lattice -- an explicit construction and a short proof of divergence}
		
\author[M. Winter]{Martin Winter}
\address{Mathematics Institute, University of Warwick, Coventry CV4 7AL, United Kingdom}
\email{martin.h.winter@warwick.ac.uk}
	
\subjclass[2010]{05C69, 05C76, 05C63, 37E15}






\keywords{clique graphs, hexagonal lattice, clique convergence, clique divergence, clique dynamics, lattice graphs}
		
\date{\today}
\begin{abstract}
We present a new, explicit and very geometric construction for~the iterated clique graphs of the hexagonal lattice $\Hex$ which makes apparent its clique-divergence and sheds light on some previous observations, such as the boundedness of the degrees and clique sizes of $k^n \Hex$ as $n\to\infty$.
\end{abstract}

\maketitle

\section{Introduction}

Given a (potentially infinite) simple graph $G$, its \emph{clique graph} $kG$ is the intersec\-tion graph of the cliques in $G$. More precisely, $kG$ has as its vertices the \emph{cliques} of $G$ (\ie~the inclusion maximal complete subgraphs), two of which are adjacent in $kG$ if they have non-empty intersection in $G$. A graph is said to be \emph{clique divergent} if its \emph{iterated clique graphs} $kG, k^2 G,k^3 G,...$ are pairwise non-isomorphic. It is called \emph{clique convergent} otherwise.

The \emph{hexagonal lattice} $\Hex$ (shown in \cref{fig:hexagonal_lattice}) is known to be clique divergent.
This was first hinted to by the findings of \cite{larrion1999clique,larrion2000locally}, which proved the clique divergence of its finite quotient graphs (6-regular triangulations of the torus and Klein bottle).
%
Later, divergence was proven directly in \cite{limbach2023characterising}, building on an explicitly construction of the clique graphs of $\Hex$ (and of other graphs) introduced in \cite{baumeister2022clique}.

\begin{figure}[h!]
    \centering
    \includegraphics[width=0.365\textwidth]{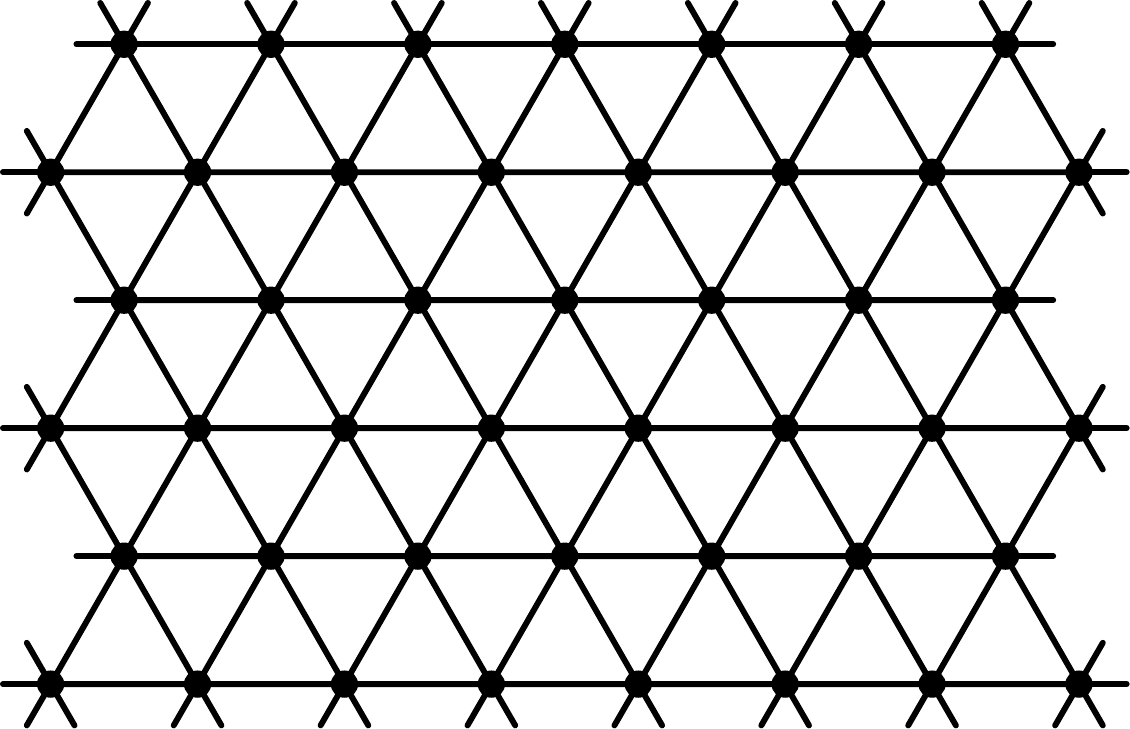}
    \caption{The hexagonal lattice.}
    \label{fig:hexagonal_lattice}
\end{figure}

The article at hand presents a new explicit, and as we find, rather neat construction of the clique graphs of the hexagonal lattice, that makes its clique-divergence completely apparent.
Our construction is noteworthy in that, once the idea~is~presented, the proofs require little more than some 3-dimensional intuition.
Moreover, this new perspective sheds light on several previous observations, such as the boundedness of the degrees and clique sizes of $k^n \Hex$ as $n\to\infty$.

Even though the result applies to a single object only, we do believe that it~is~of interest: the hexagonal lattice itself and its quotients have received notable attention in the literature on clique dynamics, some results of which~we~were able~to~re\-produced using compacter arguments.


In \cref{sec:construction} we introduce the construction and the main result, which is proven in \cref{sec:proof}. 
In \cref{sec:comments} we make some more observations regarding the construction and explain further relation to the literature.


\section{Construction and statement of main result}
\label{sec:construction}

In the following let $\Z d$ denote the \emph{$\ell^\infty$-unit distance graph} of the $\ZZ^d$ lattice.\nlspace
That is, ${\Z d}$ has vertex set $\ZZ^d$, with $x,y\in\ZZ^d$ being adjacent in ${\Z d}$ if and only~if their~$\ell^\infty$-distance equals $1$, that is, if
$$\|x-y\|_{\ell^\infty}:=\max_i|x_i-y_i|=1.$$
%

To establish our main result about the hexagonal lattice it is completely sufficient to restrict to $d=3$, on which we shall focus in the following.
The general~definition is however still useful: the case $d=2$ is especially suited for visualizations~that~provide intuition (\eg\ see \cref{fig:G2}).
Moreover, our result generalizes in some form to $d\in\{1,2,3\}$, fails however for $d\ge 4$. We discuss this further in \cref{sec:special_values}.



\begin{figure}[h!]
    \centering
    \includegraphics[width=0.23\textwidth]{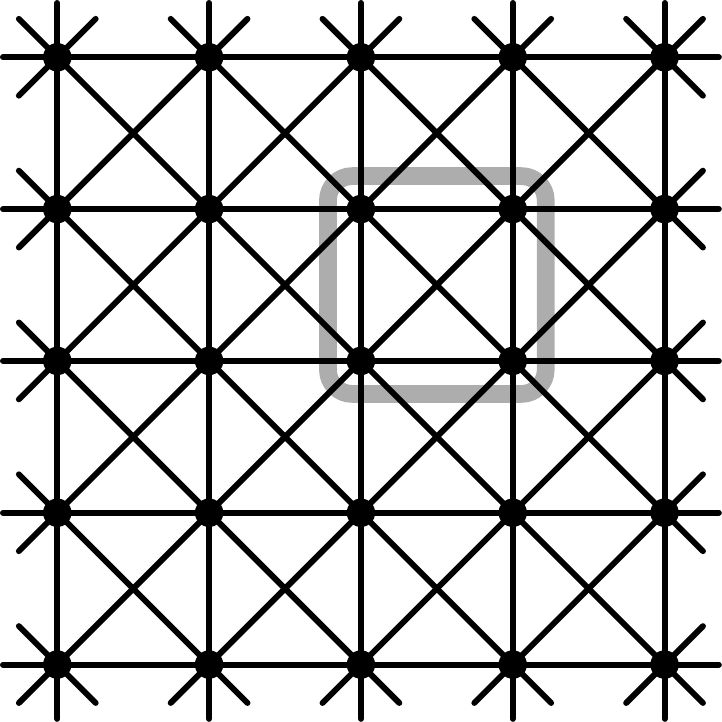}
    \caption{The graph $\Z 2$ with a highlighted clique (a ``$2\times 2$ square'').\nls All cliques are of this form.}
    \label{fig:G2}
\end{figure}

The relevance of $d=3$ is as follows:
the hexagonal lattice can be obtained as~the subgraph of ${\Z 3}$ induced on points with coordinate sum zero:
$$\Hex:={\Z 3}\big[x\in\ZZ^3\,\big\vert \, x_1+x_2+x_3=0 \big].$$
The points with a given coordinate sum we shall call a \emph{layer} of $G_d$.
%
%
Our main~observation is then that \emph{all} (even) clique graphs $k^{2n}\Hex$ can be interpreted as subgraphs of ${\Z 3}$ induced on one or more such layers. 

For general $d\ge 1$ and $n\ge 0$ we introduce the \emph{layered graph}
$${\Z d}(n):={\Z d}\Big[x\in\ZZ^d\,\Big\vert \,\big|\!\textstyle\sum_i x_i\big|\le n\Big].$$

\begin{figure}[h!]
    \centering
    \includegraphics[width=0.63\textwidth]{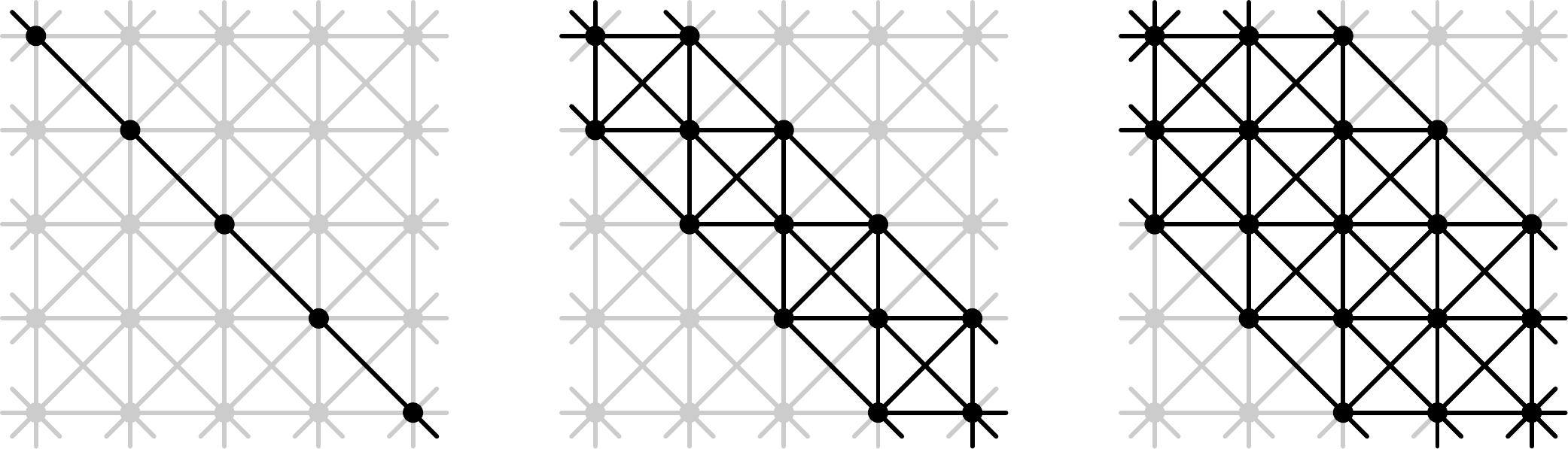}
    \caption{The layered graphs $G_2(0), G_2(1)$ and $G_2(3)$.}
    \label{fig:layered}
\end{figure}

\noindent
In particular, $\Hex= G_3(0)$.
Our core result for the hexagonal lattice reads 
$$k^{2n} \Hex \cong \Z 3(n),$$
from which clique-divergence is apparent (see also \cref{sec:divergence}).

To also state the result for odd clique graphs $k^{2n+1}\Hex$, we introduce the~``dual'' graph $\Zd d$, \ie\ the $\ell^\infty$-unit distance graph of the half-integer lattice \mbox{$\ZZ^d+\nicefrac12$} (that is, all coordinates are half-integers).
It~is clearly isomorphic to $\Z d$. 
The corresponding layered graph $\Zd d(n)$ is defined analogously:
$${\Zd d}(n):={\Zd d}\Big[x\in\ZZ^d+\nicefrac12\,\Big\vert \,\big|\!\textstyle\sum_i x_i\big|\le n\Big].$$

The main result can now be stated in full:

\begin{theorem}\label{res:main}
There are natural isomorphisms
$$k{\Z 3}(n) \cong {\Zd 3}(n+\nicefrac12) \wideand
k{\Zd 3}(n) \cong {\Z 3}(n+\nicefrac12).$$
Combining these yields $k^2 {\Z 3}(n) \cong {\Z 3}(n+1)$, and in particular,
%
$$
k^n \Hex \cong
\begin{cases}
    G_3\big(\nicefrac n2\big) & \text{if $n$ is even} \\
    G_3^*\big(\nicefrac n2\big) & \text{if $n$ is odd}
\end{cases}.
$$
%
\end{theorem}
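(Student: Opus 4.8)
The plan is to build an explicit dictionary between the cliques of $\Z 3$ and the vertices of the dual graph $\Zd 3$, and then to track what the layer truncation does to it. The starting observation is that the maximal cliques of $\Z 3$ are exactly the vertex sets of unit cubes $C = a + \{0,1\}^3$ with $a \in \ZZ^3$: in the $\ell^\infty$ metric, any set of pairwise-adjacent points must, in each coordinate, occupy two consecutive integer values, so a clique lies in such a cube, and conversely the full cube is a clique. The cube $C$ is recorded by its center $c = a + (\tfrac12,\tfrac12,\tfrac12) \in \ZZ^3 + \tfrac12$, giving a bijection between maximal cliques of $\Z 3$ and vertices of $\Zd 3$. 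Two cubes share a vertex exactly when $a - a' \in \{-1,0,1\}^3$, i.e.\ when $\|c - c'\|_{\ell^\infty} \le 1$; for distinct cubes this reads $\|c - c'\|_{\ell^\infty} = 1$, which is precisely adjacency in $\Zd 3$. This already yields $k\Z 3 \cong \Zd 3$, and the whole theorem is the layered refinement of this picture.

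For the layered statement I would first identify the maximal cliques of $\Z 3(n)$, where $S := \{x : |\sum_i x_i| \le n\}$. Since a clique of $\Z 3(n)$ is a clique of $\Z 3$ contained in $S$, and since adjoining every vertex of $C \cap S$ keeps it a clique, maximality forces each maximal clique to have the form $K = C \cap S$ for the unique cube $C$ containing it (unique because $K$ turns out to span a full box, so $C$ is its bounding box). Writing $\sigma := \sum_i c_i = \sum_i a_i + \tfrac32$ for the center sum, the eight vertices of $C$ occupy the four layers $\sum_i a_i, \dots, \sum_i a_i + 3$; a short enumeration then shows that $C \cap S$ is a maximal clique precisely when $|\sigma| \le n + \tfrac12$ (the extreme cubes with $|\sigma| = n + \tfrac12$ contribute tetrahedral $4$-cliques, whereas $|\sigma| = n + \tfrac32$ degenerates to a single vertex that is absorbed into a neighboring clique). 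This matches $c \mapsto K$ with the vertex set of $\Zd 3(n + \tfrac12)$.

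The adjacency check is where the real work sits. Two maximal cliques $K = C \cap S$ and $K' = C' \cap S$ meet iff $C \cap C' \cap S \neq \emptyset$. If $\|c - c'\|_{\ell^\infty} \ge 2$ the cubes are already disjoint, so non-adjacency is immediate; the content is the converse. I expect the main obstacle to be proving that when $\|c - c'\|_{\ell^\infty} = 1$ and both centers lie in the slab ($|\sigma|, |\sigma'| \le n + \tfrac12$), the shared vertices of $C$ and $C'$ — which form a sub-box — actually contain a point of $S$, rather than poking out of the slab. Concretely, writing $\delta = c' - c \in \{-1,0,1\}^3$ with $p$ entries $+1$, $m$ entries $-1$, and $z$ zeros, the shared vertices realize every layer sum in the full interval $[\,\sum_i a_i + p,\ \sum_i a_i + p + z\,]$, and I would show this interval meets $[-n,n]$ by a case analysis on $(p,m)$: assuming it overshoots at the top forces simultaneously $p \ge 2$ and $m \ge 2$, a contradiction, and the bottom is symmetric. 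This is the single step that genuinely uses both slab constraints at once.

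With both directions established, $c \mapsto C \cap S$ is a graph isomorphism $k\Z 3(n) \cong \Zd 3(n + \tfrac12)$. The dual isomorphism $k\Zd 3(n) \cong \Z 3(n + \tfrac12)$ is literally the same argument with the roles of $\ZZ^3$ and $\ZZ^3 + \tfrac12$ interchanged, since $\Zd 3 \cong \Z 3$. Composing the two gives $k^2 \Z 3(n) \cong \Z 3(n+1)$, and starting from $\Hex = \Z 3(0)$ a one-line induction on $n$ yields the closed form, with the parity of $n$ selecting the primal versus dual graph and the argument $\tfrac n2$ arising from the half-step increment contributed by each application of $k$.
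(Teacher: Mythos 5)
Your proposal is correct, and its overall architecture coincides with the paper's: the cliques of $G_3$ are the \tttcubes\ $a+\{0,1\}^3$, the centroid map gives $kG_3\cong G_3^*$, the maximal cliques of $G_3(n)$ are exactly the sets $C\cap G_3(n)$ for cubes $C$ whose centroid sum has absolute value at most $n+\nicefrac12$, and the substance lies in unique extension plus adjacency preservation. Where you genuinely depart from the paper is in how the technical claims are verified. The paper settles the unique-extension claim, the two-vertex criterion, and the crucial compatibility of intersections (its Claim \itm3\,b) by pictorial case analysis --- an enumeration of the seven ways a cube meets the slab (\cref{fig:3_cube_variants}) and the five configurations of two intersecting cubes (\cref{fig:cube_intersection_variants}), explicitly appealing to ``3-dimensional intuition.'' You replace this throughout by coordinate arithmetic: the bounding-box observation for uniqueness, the layer-sum profile $s,\,s+1,\,s+1,\,s+1,\,s+2,\dots$ for the criterion $|\sigma|\le n+\nicefrac12$, and, for the key adjacency step, the sign-vector parameters $(p,m,z)$ of $\delta=c'-c$, showing the shared sub-box realizes every layer sum in $[\,s+p,\ s+p+z\,]$ and that missing the slab $[-n,n]$ would force $p\ge 2$ and $m\ge 2$ simultaneously, impossible since $p+m\le 3$. (I checked the top case: overshoot gives $s+p\ge n+1$, while $|\sigma|\le n+\nicefrac12$ gives $s\le n-1$, hence $p\ge 2$, and $|\sigma'|\le n+\nicefrac12$ gives $s+p-m\le n-1$, hence $m\ge 2$; the bottom case is symmetric. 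This is sound.) Your route buys two things the figures do not: a fully checkable computation with no appeal to pictures, and a transparent identification of exactly where $d=3$ enters --- for $d=4$ one can have $p=m=2$, which is precisely the paper's counterexample $(1,-1,0,0)+\{0,1\}^4$ versus $(0,0,1,-1)+\{0,1\}^4$ (here $\delta=(-1,1,1,-1)$) from \cref{sec:special_values}, so your argument in effect also proves \cref{res:new_main} for $d\in\{1,2,3\}$ uniformly. Two minor points deserve fleshing out: the parenthetical ``$K$ turns out to span a full box'' should be justified (it holds because any intersection with at least two vertices contains an entire middle layer of the cube, whose bounding box is the whole cube --- this also gives the injectivity of $c\mapsto C\cap S$), and the disposal of single-vertex intersections implicitly uses that $G_3(n)$ has no isolated vertices, which is where the paper's excluded case $(d,n)=(1,0)$ would intrude in lower dimension.
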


\section{Proof of main result}
\label{sec:proof}

We believe that, once stated, verifying \cref{res:main} is fairly straightforward.\nls
The proof below will contain no real surprises.
It does however require us to verify some technical points that are best dealt with using some 3-dimensional intuition. 

We first present the main argument as a sequence of simple observations. Many of them are at least plausible from ``visual inspection''. For some of them we provide~more detailed arguments further below:
%

%
\begin{myenumeratearg}[itemsep=0.5em, topsep=0.3em]
    \item 
    The cliques of $G_3$ are exactly the ``\tttcubes'' in $G_3$, that is, they are~of the form $x+\{0,1\}^3$ with $x\in\ZZ^3$
    (\cref{fig:G2} shows the analogue situation in $G_2$, where the cliques are ``$2\times 2$ squares'').
    
    
    \item 
    A cube $x+\{0,1\}^3$ in $G_3$ has its centroid at $x+\{\nicefrac 12\}^3$, which is a vertex of $G_3^*$. 
    This correspondence yields an isomorphism $k G_3\cong G_3^*$.

    \item
    Since $G_3(n)$ is an induced subgraph of $G_3$, each clique $Q$ in $G_3(n)$ extends to a clique $\tilde Q$ in $G_3$, that is, $Q=\tilde Q\cap G_3(n)$.
    In fact, more is true:
    \begin{enumerate}[label=\alph*.,topsep=0.3em]
        \item the extension $\tilde Q$ is unique.
        \item cliques $Q_1, Q_2$ in $G_3(n)$ intersect if and only if their extensions $\tilde Q_1, \tilde Q_2$ intersect in $G_3$.
    \end{enumerate}
    We will provide justification for a.\ and b.\ below.
\end{myenumeratearg}


The discussion so far allows us to define a graph embedding
$$
\iota\: kG_3(n) \hookrightarrow k G_3 \overset{\sim}\to G_3^*, 
\quad 
Q 
\overset{\text{\itm3 a.}}\longmapsto 
\tilde Q 
\overset{\text{\itm1}}= 
x+\{0,1\}^3 
\overset{\text{\itm2}}\longmapsto 
x+\{\nicefrac12\}^3,
$$
and we can consider $k G_3(n)$ as a subgraph $kG_3(n)\cong\im(\iota)\subseteq G_3^*$.
By~\itm3 b.~we~can consider $kG_3(n)$ even as~an \emph{induced} subgraph of $G_3^*$.
It therefore remains to determine the vertices of $G_3^*$ in the image of $\iota$.
We need two more observations:
%
%
\begin{myenumeratearg}[itemsep=0.5em, topsep=0.3em]
\setcounter{enumi}{3}  
    \item A clique $\tilde Q$ in $G_3$ is an extension of a clique in $G_3(n)$ if and only if~$\tilde Q$~inter\-sects $G_3(n)$ in \emph{at least two} vertices (see \cref{fig:one_step} for the analogous situation within $G_2$)
    \item
    A \tttcube\ $x+\{0,1\}^3$ intersects $G_3(n)$ in at least two vertices if and only if its centroid $x+\{\nicefrac12\}^3$ has coordinate sum of absolute value $\le n+\nicefrac12$.

\end{myenumeratearg}
\begin{figure}[h!]
    \centering
    \includegraphics[width=0.75\textwidth]{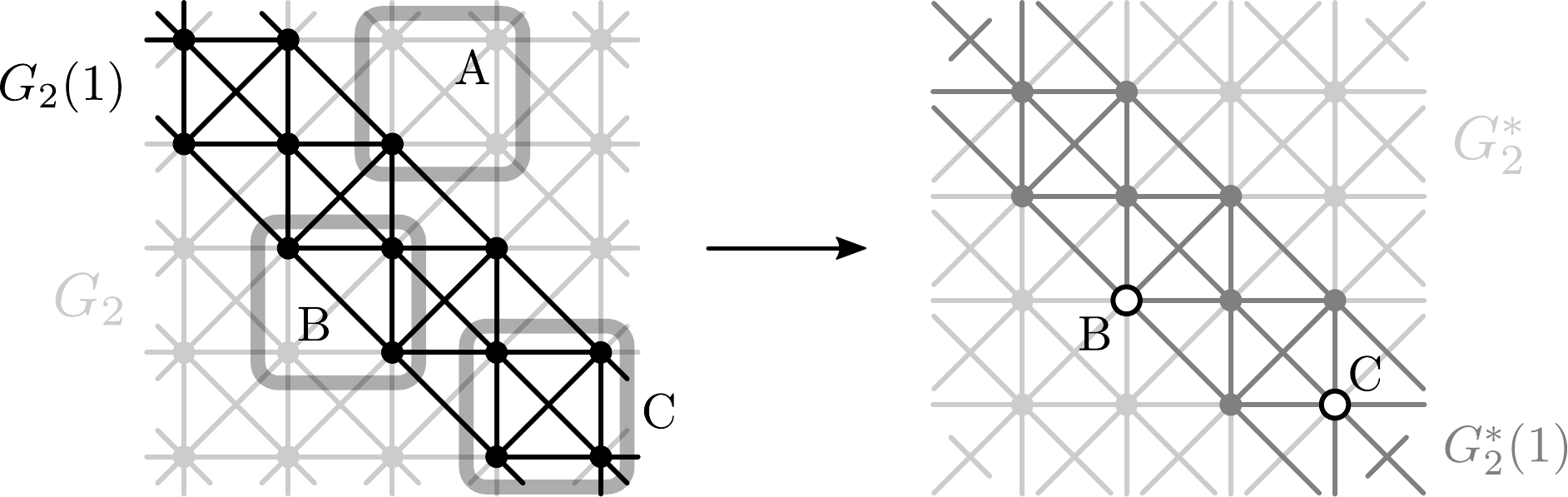}
    \caption{The ``squares'' A, B and C intersect $G_2(1)$, but only B and C intersect in more than one vertex and therefore correspond to cliques in $G_2(1)$. Those squares then yield vertices of $k G_2(1)$.
    Note that $k G_2(1)\cong G_2^*(1)$ in contrast to $d=3$, where $kG_3(1)\cong G_3^*(1+\nicefrac12)$.}
    \label{fig:one_step}
\end{figure}
This shows that $kG_3(n)\cong \im(\iota) = G_3^*(n+\nicefrac12)$ and concludes this part of the proof.
By swapping $G_3$ and $G_3^*$ we obtain an analogous proof for the other isomorphism.

We now provide arguments for \itm3 a., b., as well as \itm4 and \itm5.








\subsection*{Claim \itm3, a}
\emph{Every clique $Q$ in $G_3(n)$ has a unique extension $\tilde Q$ in $G_3$.}

Suppose that there are two distinct cliques (aka.\ cubes) $\tilde Q,\tilde Q'$ in $G_3$ that extend the clique $Q$ of $G_3(n)$.
Then $Q\subseteq \tilde Q\cap\tilde Q'$, which is a shared face of the two cubes.
However, considering \cref{fig:G2} we see that an intersections of a \tttcube\ with $G_3(n)$ that has at least two vertices (such as $Q$) never lies completely inside a face.

\begin{figure}[h!]
    \centering
    \includegraphics[width=0.45\textwidth]{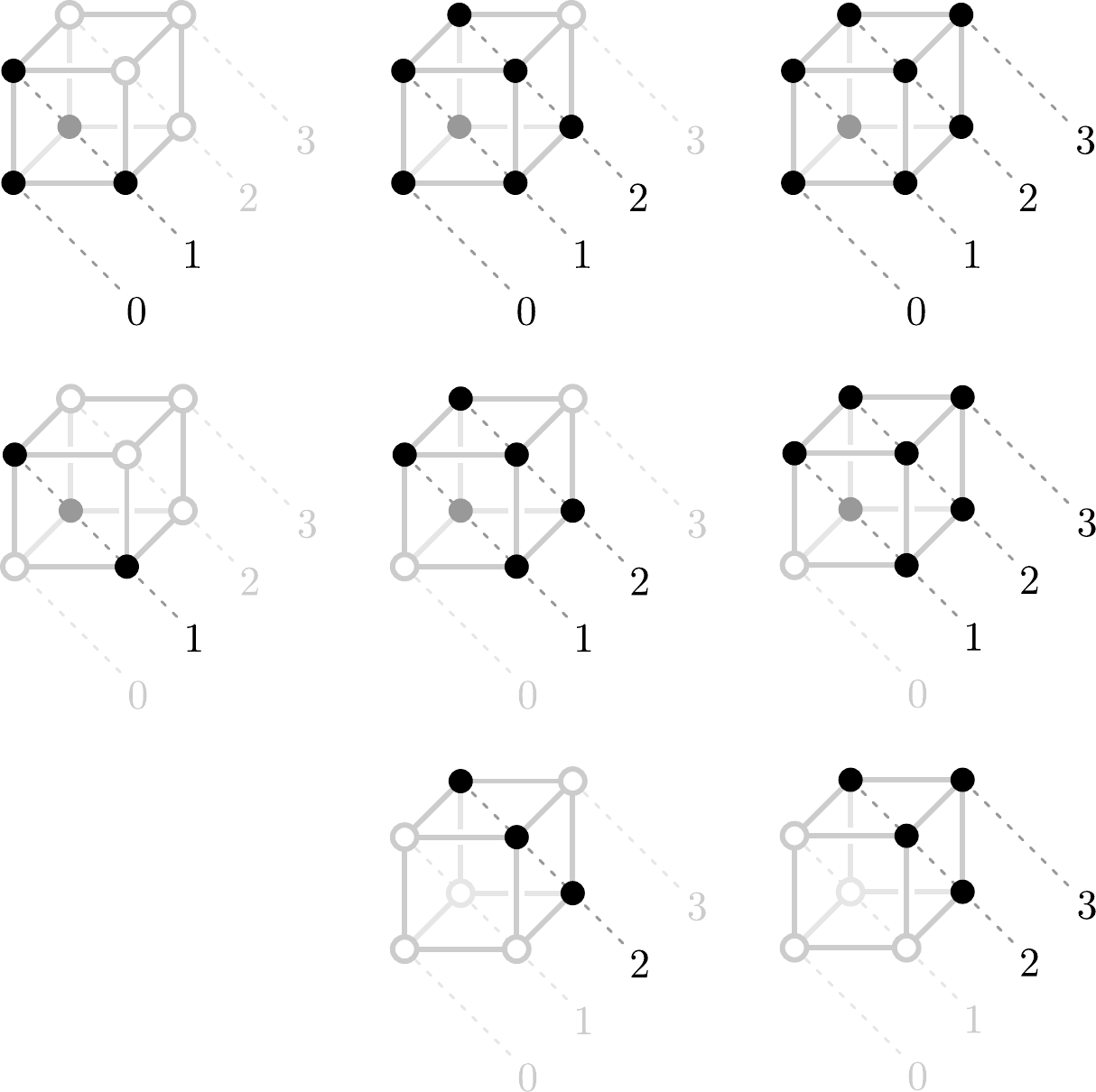}
    \caption{The seven ways in which a \tttcube\ can intersect $G_3(n)$ in at least two vertices. The dashed lines represent the layers of $G_3$ that intersect the cube (the numbers are for reference and do not necessarily indicate the coordinate sum of the layer). The line is black if the layer is in $G_3(n)$.}
    \label{fig:3_cube_variants}
\end{figure}

\subsection*{Claim \itm3, b}
\emph{Cliques $Q_1, Q_2$ in $G_3(n)$ intersect in $G_3(n)$ if and only if their~extension $\tilde Q_1, \tilde Q_2$ intersect in $G_3$.}

One direction is obvious.
For the other direction consider \cref{fig:cube_intersection_variants}:
it shows the five way in which two distinct \tttcubes\ in $G_3$ can intersect (up to symmetry of $G_3(n)$).
The figure also highlights layers of $G_3(n)$ that must necessarily intersect the cubes in order for $G_3(n)$ to intersect each cube in at least two vertices.
It~is~evident from the figure that these intersections necessarily contain vertices that lie in both cubes.
In other words, these cubes also intersect when restricted to $G_3(n)$.



\begin{figure}[h!]
    \centering
    \includegraphics[width=0.9\textwidth]{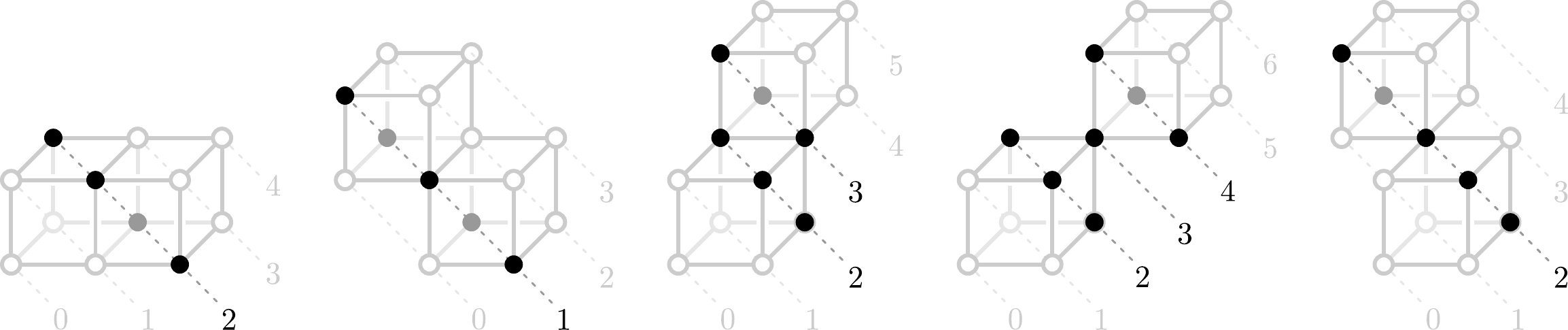}
    \caption{The five configurations (up to symmetries of $G_3(n)$) in which two distinct \tttcubes\ in $G_3$ can intersect.
    Each figure highlights the minimal amount of consecutive layers that intersects each cube in~at least two vertices 
    (strictly speaking, in the second case from the left it must be at least one of the layers 1 and 2).}
    \label{fig:cube_intersection_variants}
\end{figure}

\subsection*{Claim \itm4}
\emph{%
A clique $\tilde Q$ in $G_3$ is an extension of a clique in $G_3(n)$ if and only~if~$\tilde Q$ inter\-sects $G_3(n)$ in at least two vertices.
}

If $\tilde Q\cap G_3(n)$ has a single vertex, then it is not a clique of $G_3(n)$, since $G_3(n)$ has no isolated vertices.
Conversely, suppose $\tilde Q$ intersects $G_3(n)$ in at least two~ver- tices.
Let $Q$ be a clique of $G_3(n)$ that contains $\tilde Q\cap G_3(n)$, and let $\tilde Q'$~be~its~ex\-tension. 
If $\tilde Q\not=\tilde Q'$ then $\tilde Q\cap G_3(n)\subseteq \tilde Q\cap\tilde Q'$ must be a shared face of the cubes.
But considering once more the possible ways in which a cube can intersect $G_3(n)$ in at least two vertices in \cref{fig:3_cube_variants}, we see that this is not possible. Thus $\tilde Q\cap G_3(n)=\tilde Q'\cap G_3(n)=Q$ is a clique.




\subsection*{Claim \itm5}
\emph{%
A cube $\tilde Q:= x+\{0,1\}^3$ intersects $G_3(n)$ in at least two vertices if and only if its centroid $x+\{\nicefrac12\}^3$ has coordinate sum of absolute value $\le n+\nicefrac12$.
}

Let $s$ be the coordinate sum of $x$.
The layers of $G_3$ that intersect $\tilde Q$~in~at~least two vertices have coordinate sum $s+1$ and $s+2$. 
Thus, for $G_3(n)$ to intersect $\tilde Q$ we require $|s+1|\le n$ or $|s+2|\le n$. Elementary computation shows that this is equivalent to $|s+\nicefrac32|\le n+\nicefrac12$.
Since $s+\nicefrac 32$ is the coordinate sum of the centroid of $\tilde Q$, the claim follows.

\section{Further comments}
\label{sec:comments}

\subsection{Bounded degree and clique number}
\label{sec:bounded}

The vertex degree of ${\Z 3}$ is $3^3-1=26$. 
As we have seen, $k^n\Hex$ appears as a subgraph of $G_3$, which shows~that~the~vertex degrees in $k^n \Hex$ stay bounded as $n\to\infty$.
The number 26 also played a major role in the proofs of \cite{limbach2023characterising}, where it was considered a curiosity.
Our constructions~provides an explanation for the appearance of this peculiar number. 

We also note that the clique number of ${\Z 3}$ is $8$. 
This fact gives a concise explanation for the observation that clique numbers of $k^n\Hex$ stay bounded as $n\to\infty$. 
This has previously been observed for the finite quotients of $\Hex$ in \cite{larrion1999clique,larrion2000locally}.

\subsection{Clique-divergence of $\boldsymbol{\mathrm{Hex}}$}
\label{sec:divergence}

The explicit form $k^{2n}\Hex\cong G_3(n)$ makes apparent the clique-divergence of $\Hex$. Here is a more explicit argument:
consider the subgraph $H_n$ of $G_3(n)$ induced on all vertices of degree $<26$. One can show that~for $n$ sufficiently large $H_n$ has two connected components, and the graph-theoretic~distance between those components diverges as $n\to\infty$.



\subsection{Clique-convergence of $\boldsymbol{G_d}$}

As noted in \cref{sec:proof} \itm2, we have $k G_3\cong G_3^*$, and by symmetry, $kG_3^*\cong G_3$.
Thus $k^2 G_3\cong G_3$, and $G_3$ is clique-convergent.

The argument applies completely analogous to $G_d$ for general $d\ge 1$: observe first that the cliques in $G_d$ are exactly the $2\times\cdots\times 2$ cubes $x+\{0,1\}^d$. 
The isomorphism $kG_d\cong G_d^*$ is then given by $x+\{0,1\}^d\mapsto x+\{\nicefrac12\}^d$.

For example, for $d=1$ we have that $G_1$ is the infinite path graph, which indeed is clique-convergent.

\subsection{Other values for $\boldsymbol{d}$}
\label{sec:special_values}

In this article we have been motivated mainly by the clique dynamics of the hexagonal lattice, and therefore, the case $d=3$. 
As it turns out, the statement of \cref{res:main} and its proof given in \cref{sec:proof} can be easily adjusted to also work with a few other values of $d$, thought $d=3$ remains the most interesting one of them:

\begin{theorem}\label{res:new_main}
If $d\in\{1,2,3\}$ and $n\ge 0$, but $(d,n)\not=(1,0)$, then there are~natural iso\-morphisms
$$k{\Z d}(n) \cong {\Zd d}(n+d/2-1) \wideand
k{\Zd d}(n) \cong {\Z d}(n+d/2-1).$$
Combining these yields $k^2 {\Z d}(n) \cong {\Z d}(n+d-2)$. 
%
\end{theorem}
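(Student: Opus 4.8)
\subsection*{Proof proposal for \cref{res:new_main}}

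The plan is to rerun, essentially verbatim, the five-step argument of \cref{sec:proof}, carrying the dimension $d$ along and isolating the one place where the hypothesis $d\le 3$ is genuinely used. For $d=3$ this is literally \cref{res:main}, already established; the cases $d\in\{1,2\}$ are strictly easier. The isomorphism in each case is induced by the same geometric centroid correspondence as in \cref{sec:proof}, and the only substantive new bookkeeping is the shift $d/2-1$ in the coordinate-sum bound, which for $d=3$ specialises to the value $\nicefrac12$.

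Two of the five observations are completely dimension-free. First, the cliques of $\Z d$ are exactly the cubes $x+\{0,1\}^d$: the closed neighbourhood of a vertex is the ``large cube'' $x+\{-1,0,1\}^d$, a clique is the intersection of the closed neighbourhoods of its members, such an intersection of axis-parallel cubes is an axis-parallel box, and the only box containing no two points at $\ell^\infty$-distance $\ge 2$ is a unit cube; this reproduces \itm1. Second, sending each cube to its centroid $x+\{\nicefrac12\}^d$ yields $k\Z d\cong\Zd d$, since two cubes meet iff their centroids lie at $\ell^\infty$-distance $\le 1$, reproducing \itm2. The counting behind the bound is likewise dimension-free and is where $d/2-1$ emerges: if $x$ has coordinate sum $s$, then $x+\{0,1\}^d$ has $\binom{d}{k}$ vertices of coordinate sum $s+k$, so the endpoints $k=0,d$ occur once each while every interior sum $s+1,\dots,s+d-1$ is attained $\binom{d}{k}\ge d\ge 2$ times. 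Hence (for $d\ge 2$) the cube meets $\Z d(n)$ in at least two vertices iff some interior sum lies in $[-n,n]$, i.e.\ iff $s+1\le n$ and $s+d-1\ge -n$; rewriting in terms of the centroid's coordinate sum $s+d/2$ gives exactly $|s+d/2|\le n+d/2-1$, which is the generalisation of \itm5. The case $d=1$ (no interior sums) is checked directly and yields the same bound, with $(d,n)=(1,0)$ excluded because $\Z 1(0)$ is a single isolated vertex.

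The unique-extension claim \itm3 a carries over to every $d$ through the coordinate-permutation symmetry of $\Z d(n)$. After translating a given extension to the standard cube $\{t,t+1\}^d$, the set $Q=\Z d(n)\cap\{t,t+1\}^d$ is invariant under all coordinate permutations; a second, distinct extension would force $Q$ into a proper common face of the two cubes, but a permutation-invariant subset of a proper face of the fully symmetric cube collapses to at most one of the diagonal vertices $\{t\}^d,\{t+1\}^d$, contradicting that $Q$ has at least two elements (here the assumption $(d,n)\ne(1,0)$ guarantees $\Z d(n)$ has no isolated vertices). Combining the extension characterisations then identifies $k\Z d(n)$ with the set of cubes meeting $\Z d(n)$ in $\ge 2$ vertices, i.e.\ with the vertices of $\Zd d(n+d/2-1)$; swapping the roles of $\Z d$ and $\Zd d$ gives the second isomorphism, and composing the two yields $k^2\Z d(n)\cong\Z d(n+d-2)$.

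The genuine obstacle is the induced-subgraph claim \itm3 b, that two cliques of $\Z d(n)$ intersect if and only if their extensions intersect in $\Z d$; equivalently, whenever two cubes meet in $\Z d$ and each already meets $\Z d(n)$ in at least two vertices, they must share a vertex lying \emph{inside} $\Z d(n)$. I would settle this by the finite case analysis suggested by \cref{fig:3_cube_variants,fig:cube_intersection_variants}, enumerating up to symmetry how two cubes can overlap and which consecutive layers each is forced to occupy. This analysis closes precisely for $d\le 3$; for $d\ge 4$ two cubes can share only vertices confined to layers outside $\Z d(n)$ while each still dips into $\Z d(n)$ in two vertices, so that the image of the centroid map is no longer an \emph{induced} subgraph. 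This failure is exactly why the statement is restricted to $d\in\{1,2,3\}$.
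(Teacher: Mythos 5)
Your proposal is correct and is essentially the paper's own proof: the paper establishes \cref{res:new_main} precisely by rerunning the five-step argument of \cref{sec:proof} with $d$ carried along, where $(d,n)\neq(1,0)$ enters through the no-isolated-vertices requirement, your binomial count reproducing the bound $|s+d/2|\le n+d/2-1$ is exactly the generalization of step \itm5, and $d\le 3$ is needed only in step \itm3 b., whose failure for $d\ge 4$ the paper documents with the same kind of counterexample you describe (the cubes $(1,-1,0,0)+\{0,1\}^4$ and $(0,0,1,-1)+\{0,1\}^4$ meet only in $(1,0,1,0)$, which lies outside $G_4(1)$). One minor point worth noting: a translation carrying an arbitrary cube to $\{t,t+1\}^d$ is in general \emph{not} a symmetry of ${\Z d}(n)$ (only sum-zero translations preserve it), but your argument survives unchanged because the translated constraint set is still a coordinate-permutation-invariant slab, and permutation invariance is all your face-collapse step uses.
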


Let us consider the values $d\in\{1,2\}$ in some more detail, and also explain where the proof fails for $d\ge 4$.

For $\boldsymbol{d=1}$ the graph $G_1(0)$ is a single vertex and must be excluded from \cref{res:new_main} (in this case the proof in \cref{sec:proof} fails in step \itm4, where we require that $G_1(0)$ has no isolated vertices).
For general $n\ge 1$, the graph $G_1(n)$ is a path of length $2n+1$. 
The peculiarity of the case $d=1$ is that $k^2 G_1(n)\cong G_1(n-1)$, that is, the clique graphs are shrinking, completely in agreement with what we expect from the finite path graph.

For $\boldsymbol{d=2}$ we have $k^2G_2(n)\cong G_2(n)$, and so the clique graphs are ``stable''. A special case is $G_2(0)$, which is the infinite path. See \cref{fig:layered} for other examples. 

For $\boldsymbol{d\ge 4}$ there is no direct analogue of \cref{res:new_main}.
The proof of \cref{sec:proof} fails in step \itm3 b.: two cliques $Q_1,Q_2$ in $G_4(n)$ can be disjoint, while their extensions in $G_4$ intersect. Here is an example:
%
%
the $2\times2\times2\times 2$ cubes
$$(1,-1,0,0)+\{0,1\}^4\wideand (0,0,1,-1)+\{0,1\}^4$$
intersect only in the point $(1,0,1,0)$. Yet their intersections with $G_4(1)$ are disjoint, even though each cube intersects $G_4(1)$ in at least two vertices.
We can however still find $kG_d(n)$ as a spanning subgraph of $G_d^*(n+d/2-1)$, and vice versa.

\subsection{Triangulations of the torus and the Klein bottle}
\label{sec:torus_Klein_bottle}

Any group action~$\Gamma\curvearrowright\Hex$ extends uniquely to actions $\Gamma\curvearrowright {\Z 3}$ and $\Gamma\curvearrowright G_3(n)$ that preserves coordinate sums.
Taking the quotient of $G_3(n)$ by such an action yields an explicit~description for the clique graphs of the quotient $T:=\Hex/\Gamma$, which is a 6-regular triangulation of an unbounded surface (\ie\ the torus, the Klein bottle, the infinite cylinder, the infinite Möbius strip or the plane):
$$k^{2n}T=k^{2n}(\Hex/\Gamma) \cong (k^{2n}\Hex)/\Gamma \cong  G_3(n)/\Gamma.$$
Some more technicalities are involved in verifying the two isomorphism (see also \cite[Lemma 4.4]{limbach2023characterising}), but all in all, we obtain a concise description of the clique graphs first mentioned in \cite{larrion2000locally}, that also makes transparent their linear growth as $n\to\infty$.

\subsection{Relation to the geometric clique graph}
\label{sec:geometric_clique_graph}

The geometric clique graphs $\mathcal G_n$ (introduced in \cite{baumeister2022clique}) provides an alternative description for the clique graphs of the hexagonal lattice (and more generally, of all ``locally cyclic graph of minimum degree $\delta\ge 6$'').
The vertices of $\mathcal G_n$ are the triangular shaped subgraphs of $\Hex$ (shown in \cref{fig:pyramids}) of side length $m$, where $m\le n$ and $m\equiv n\pmod 2$, subject to a non-trivial set of rules for adjacency (see \cite[Definition 4.1]{baumeister2022clique} or \cite[Definition 2.1]{limbach2023characterising}).
It~was proven in \cite[Theorem 6.8 + Corallary 7.8]{baumeister2022clique} that $k^n \Hex \cong \mathcal G_n$.

\begin{figure}[h!]
    \centering
    \includegraphics[width=0.6\textwidth]{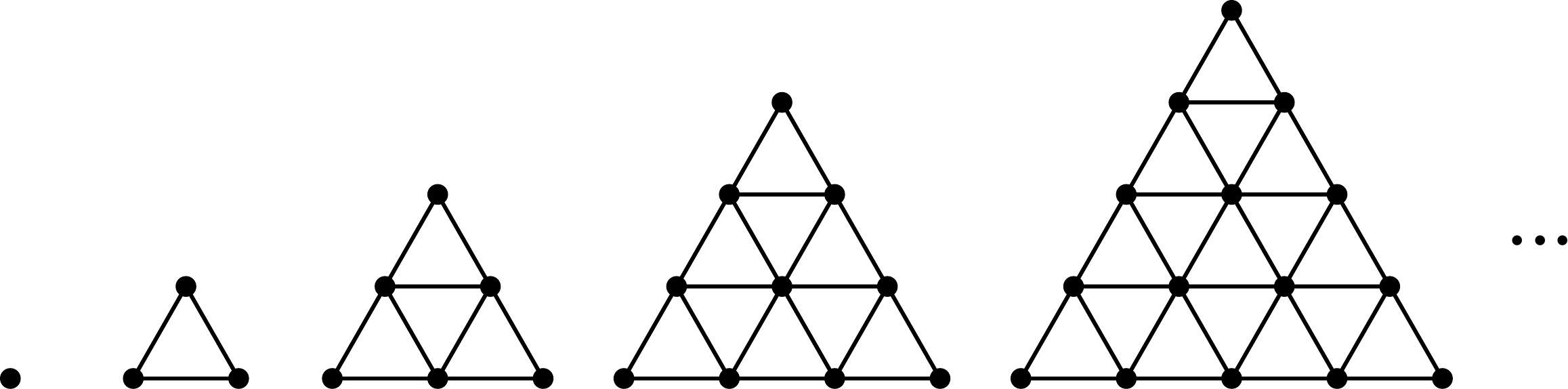}
    \caption{The ``triangular-shaped subgraphs'' of $\Hex$ of side length~0, ..., 4, as used in the construction of the geometric clique graph $\mathcal G_n$}
    \label{fig:pyramids}
\end{figure}

%

Our description of $k^n\Hex$ allows for an alternative interpretation of $\mathcal G_n$ and yields a natural explanation for the otherwise ad hoc adjacency rules.
Define the positive \resp\ negative orthant:
$$O^+:=\{x\in \ZZ^3\mid x_1,x_2,x_3\ge 0\}\wideand O^-:=\{x\in \ZZ^3\mid x_1,x_2,x_3\le 0\}$$
and set $O^\pm:=O^+\cup O^-$. To each vertex $x\in G_3\cupdot G_3^*$ (which is a point with integer or half-integer coordinates) we associate a triangular-shaped subgraph $T_x\subset \Hex=G_3(0)$ as follows (\cf\ \cref{fig:orthant_intersection}):
$$T\:\,x\;\longmapsto\; T_x:=G_3(0)\cap (2x+O^\pm).$$%
\begin{figure}[h!]
    \centering
    \includegraphics[width=0.35\textwidth]{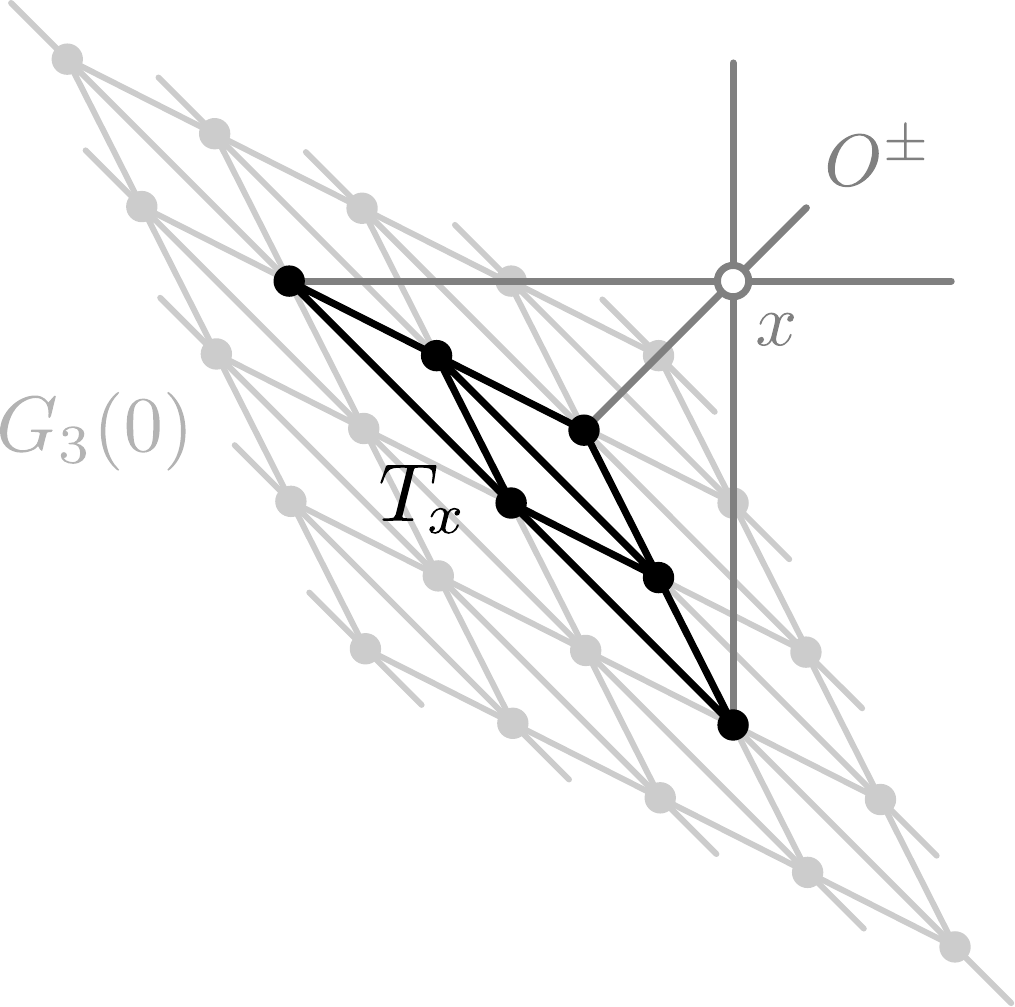}
    \caption{The intersection of $\Hex=G_3(0)$ with $x+O^\pm$, yields~a~tri\-angular-shaped subgraph $T_x$.}
    \label{fig:orthant_intersection}
\end{figure}%
This yields an interpretation for the vertices of $k^{2n}\Hex\cong G_3(n)$ resp.\ $k^{2n-1}\Hex\cong G_3^*(n)$ as triangular-shaped subgraphs of $\Hex$ which is in accordance with the interpretation from $\mathcal G_n$. In fact, $x,y\in G_3\cupdot G_3^*$ are adjacent if and only if $T_x$ and $T_y$ are adjacent in $\mathcal G_n$, providing a new interpretation for the adjacency rules in $\mathcal G_n$.

\bibliographystyle{abbrv}
\bibliography{literature}

\end{document}